\documentclass[11pt]{article}
\usepackage[margin=1in]{geometry}
\usepackage{amsmath,amssymb,amsthm}
\usepackage{tikz}
\newcommand{\avec}{\operatorname{avec}}
\newcommand{\diam}{\operatorname{diam}}
\theoremstyle{plain}
\newtheorem{proposition}{Proposition}[section]
\newtheorem{lemma}{Lemma}[section]
\newtheorem{theorem}{Theorem}[section]
\newtheorem{conjecture}{Conjecture}[section]
\theoremstyle{remark}
\newtheorem{remark}{Remark}[section]

\begin{document}

\title{Average Eccentricity in $k$-Connected Tournaments}
\author{F.J. Osaye\\ Department of Mathematics and Statistics\\ Troy University, AL}
\date{}
\maketitle

\begin{abstract}
The average eccentricity $\avec(D)$ of a strong digraph $D$ is the mean eccentricity of all vertices in $D$. A tournament $T$ is an orientation of a complete graph. In this paper, we prove that the average eccentricity satisfies $2 \le \avec(T) < \frac{n}{2k} + \frac{11}{2} + \frac{k}{8n}$ for a $k$-connected tournament $T$ of order $n$. We further examine the case for strong tournaments where $k = 1$, and show that $\avec(T) \le \frac{n+11}{2} + \frac{1}{8n}$ for any strong tournament $T$. Moreover, we show that the lower bound $2$ is sharp and that the upper bounds are best possible apart from an additive constant.
\end{abstract}

\section{Introduction}

Let $D = (V,A)$ be a connected digraph (directed graph) of order $n$. $D$ is said to be strong if it is possible to reach any vertex from any other vertex in $D$. For a strongly connected digraph $D$ and $u,v \in V(D)$, $d_D(v,u)$ denotes the directed distance from $v$ to $u$. With $n = |V(D)|$, the average distance $\mu(D)$ is the mean distance between all ordered pairs of distinct vertices of $D$, i.e.,
\[
\mu(D) = \frac{1}{n(n-1)} \sum_{a \ne b} d_D(a,b).
\]
The eccentricity of vertex $v$ is
\[
\operatorname{ecc}_D(v) = \max_{u \in V(D)} d_D(v,u).
\]
The average eccentricity is
\[
\avec(D) = \frac{1}{n} \sum_{v \in V(D)} \operatorname{ecc}_D(v).
\]

The average eccentricity $\avec(G)$ for undirected graphs $G$ was first introduced by Buckley and Harary [1] under the name eccentric mean, while its systematic study was first carried out by Dankelmann et al.\ [4] who showed, among other results, that the path maximizes the average eccentricity among all connected undirected graphs of order $n$. Since then, several upper and lower bounds have been obtained on $\avec(G)$ in terms of other graph parameters. See, for example, ([2], [3], [5], [9], and [7]). However, not much is known about the average eccentricity for digraphs while much work has been done on the average distance for both directed and undirected graphs. For a survey of results on the average distance in undirected and directed graphs we refer to Plesn\'ik [10]. One of the results obtained by Plesn\'ik [10] showed that for a strong tournament $T$,
\[
\frac{3}{2} \le \mu(T) \le \frac{n+4}{6} + \frac{1}{n},
\]
where $\mu(T) = \frac{3}{2}$ if and only if $\diam(T) = 2$ and $\mu(T) = \frac{n+4}{6} + \frac{1}{n}$ when $\diam(T) = n-1$. The bound was later improved by Moon [8] by taking into account the degree of a vertex whose removal keeps the tournament strong. Plesn\'ik's [10] bound was further improved for higher connectivity by Dankelmann and Volkmann [6] who proved that a $k$-connected tournament $T$ of order $n$ has average distance
\[
\mu(T) < \frac{n}{6k} + \frac{19}{6} + \frac{k}{n},
\]
which is best possible apart from the additive constant. Their method groups vertices into distance layers from a diametral endpoint, bounds pairwise distances via the layer indices, and reduces to a vertex-weighted path. In this paper, we prove the analogous bounds on average eccentricity for both strong tournaments and tournaments of higher connectivity.

\section{Preliminaries}

All digraphs are finite. For a digraph $D = (V,A)$, $V(D)$ denotes the vertex set and $n = |V(D)|$ the order while $A(D)$ denotes the arc set. An arc $(u,v)$ is a directed edge with tail $u$ and head $v$; we write $u \to v$ if $(u,v)$ is an arc of $D$. Then $v$ is an out-neighbor of $u$ (and $u$ an in-neighbor of $v$). A tournament is an orientation of a complete graph where for every pair of distinct vertices $u,v$, exactly one of $u \to v$, $v \to u$ holds. This means that for every pair of distinct vertices, there is exactly one directed edge between them. Hence, a tournament with $n$ vertices has exactly $|A| = \binom{n}{2}$ arcs. A tournament is transitive if it contains no directed cycle; equivalently, its vertices can be labelled $v_1,\dots,v_t$ so that $v_i \to v_j$ if and only if $i < j$. This means a tournament is said to be transitive if $u \to v$ and $v \to w$ imply $u \to w$ for $u,v,w \in V$. The directed distance $d_D(u,v)$ is the number of arcs in a shortest directed $u \to v$ path, or $\infty$ if no such path exists. For $k \ge 1$, $D$ is $k$-connected if it remains strong after deleting any set of fewer than $k$ vertices. In other words, a digraph is said to be strong if for every pair of distinct vertices $u$ and $v$, there is a directed path from $u$ to $v$ and a directed path from $v$ to $u$. A set $S \subset V(D)$ is an $X$--$Y$ separator, for $X,Y \subset V(D)$, if every directed path starting in $X$ and ending in $Y$ contains a vertex of $S$. Every $k$-connected tournament, for $k \ge 1$, is strong; in particular 1-connectivity is exactly strong connectivity (deleting fewer than one vertex being vacuous). The converse fails: a strong tournament need not be 2-connected (e.g.\ deleting one vertex of $T_{n,1}$ disconnects it). Thus
\[
\text{strong} = \text{1-connected} \supsetneq \text{2-connected} \supsetneq \cdots \supsetneq \text{$k$-connected} \supsetneq \cdots.
\]
Let $D$ be strong and $v \in V(D)$. The eccentricity of $v$ is
\[
\operatorname{ecc}_D(v) = \max_{u \in V(D)} d_D(v,u),
\]
the greatest distance from $v$ to any vertex. We define:
\begin{itemize}
\item the diameter $\diam(D) = \max_v \operatorname{ecc}_D(v)$;
\item the radius $\operatorname{rad}(D) = \min_v \operatorname{ecc}_D(v)$.
\end{itemize}
We write $d_T$, $\operatorname{ecc}_T$, etc.\ when the digraph is a tournament $T$, dropping subscripts when the digraph is clear from context. The vertex set of the extremal constructions in this paper is partitioned into ordered classes called layers. Relative to any layering, and in particular to the distance layers below, movement from layer $i$ to layer $j$ is forward if $j > i$ and backward if $j < i$. A backward jump from a vertex $x$ in layer $i$ is an arc $x \to y$ with $y$ in some layer $j < i$. Arcs from a layer $L_i$ to a layer $L_j$ are complete bipartite if $x \to y$ for every $x \in L_i$ and every $y \in L_j$.

Fix $u \in V(T)$ with $\operatorname{ecc}_T(u) = D := \diam(T)$. The distance layers from $u$ are
\[
V_i = \{w \in V(T) : d_T(u,w) = i\}, \quad n_i = |V_i|, \quad i = 0,\dots,D,
\]
which partition $V(T)$; thus $V_0 = \{u\}$, $n_D \ge 1$, and $\sum_{i=0}^D n_i = n$.

\section{Main Results}

We begin by presenting preliminary results useful for proving the main theorems.

\begin{proposition}
Let $D = \diam(T)$ be the diameter of the $k$-connected tournament $T$ of order $n$. Then:
\[
D \le \frac{n+k-2}{k}.
\]
\end{proposition}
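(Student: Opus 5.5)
We work with the distance layers $V_0,\dots,V_D$ from a vertex $u$ with $\operatorname{ecc}_T(u)=D$, as set up in the Preliminaries. The aim is to show that every intermediate layer is large: $n_i \ge k$ for each $1 \le i \le D-1$. Once this is known, the counting is immediate. We have $n_0 = 1$ and $n_D \ge 1$, so
\[
n=\sum_{i=0}^{D} n_i \ \ge\ 1 + k(D-1) + 1 = kD - k + 2,
\]
and rearranging gives $D \le \frac{n+k-2}{k}$.

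To prove $n_i \ge k$ for $1\le i\le D-1$, we use the standard separator property of distance layers. Every arc $x\to y$ satisfies $d(u,y)\le d(u,x)+1$. Hence no arc goes from $V_0\cup\dots\cup V_{i-1}$ to $V_{i+1}\cup\dots\cup V_D$. Consequently every directed path from $u$ to a vertex $w\in V_D$ must pass through $V_i$ when $1\le i\le D-1$. So $V_i$ is a $\{u\}$--$\{w\}$ separator in the sense defined earlier.

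Now suppose $n_i \le k-1$ for some such $i$. Delete $V_i$. Since $u\notin V_i$ and $w\notin V_i$, the digraph $T-V_i$ still contains both $u$ and $w$, but it has no directed $u$--$w$ path. This contradicts the definition of $k$-connectivity, which requires $T-S$ to be strong whenever $|S|<k$. Hence $n_i\ge k$.

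The only step needing care is the separator claim, namely that a path cannot skip layer $i$ going forward. This follows directly from the fact that distances from $u$ increase by at most one along an arc. Any path from $u$ (layer $0$) to $w$ (layer $D$) therefore visits every layer in between. No tournament-specific structure is needed beyond the definitions. Menger's theorem could also be invoked, but the direct deletion argument suffices. The degenerate cases $D\le 1$ are trivially consistent with the bound, since $n \ge k+1$ in a $k$-connected tournament.
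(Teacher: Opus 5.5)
Your proof is correct and follows the paper's argument. You take distance layers from an eccentric vertex $u$, show each interior layer $V_i$ separates $u$ from $V_D$ so that $n_i \ge k$, and then count $n \ge 2 + k(D-1)$. The only difference is that you justify the separator property explicitly and get $n_i \ge k$ directly from the definition of $k$-connectivity (deleting $V_i$ destroys every $u$--$w$ path), rather than citing Menger's theorem as the paper does; this direct argument is indeed all that is needed.
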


\begin{proof}
Let $D = \diam(T)$ and fix $u \in V(T)$ with $\operatorname{ecc}_T(u) = D$. Choose $v$ with $d_T(u,v) = D$. For $i = 0,\dots,D$ define the distance layers
\[
V_i = \{w \in V(T) : d_T(u,w) = i\}, \quad n_i = |V_i|.
\]
Then $V_0 = \{u\}$, $v \in V_D$ so $n_D \ge 1$, $\sum_{i=0}^D n_i = n$, and the $V_i$ partition $V(T)$. Since $T$ is $k$-connected, every $V_i$ with $1 \le i \le D-1$ is a $u$--$V_D$ separator; by Menger's theorem,
\begin{equation}
n_i \ge k \qquad (1 \le i \le D-1).
\end{equation}
Consequently,
\begin{equation}
n = \sum_{i=0}^D n_i \ge 1 + k(D-1) + 1,
\end{equation}
Thus,
\begin{equation}
D \le \frac{n+k-2}{k}.
\end{equation}
\end{proof}

The bound (3) in Proposition 3.1 is best possible. Equality requires $n_0 = n_D = 1$ and $n_i = k$ for $1 \le i \le D-1$, i.e.\ $n = 2 + k(D-1)$. For $D \ge 4$, this is attained by the following $k$-connected tournament: layers $L_0 = \{u\}$, $L_1,\dots,L_{D-1}$ with $|L_i| = k$, $L_D = \{v\}$, with complete bipartite arcs $L_i \to L_{i+1}$, complete bipartite backward arcs $L_i \to L_j$ for $j \le i-2$, and a transitive tournament within each layer. As in Proposition 3.3, exactly one arc joins each pair. It is $k$-connected: deleting fewer than $k$ vertices leaves every interior layer nonempty; if both endpoints are deleted, the remaining interior layers (at least three, since $D \ge 4$) are still strongly connected via the forward chain and the backward skip from $L_{D-1}$ to $L_1$, and if exactly one endpoint is deleted, the return path is supplied by the backward skip from the last surviving layer to the first; and $d(u,v) = D$ is the diameter, since forward motion advances one layer per step and every other pair is joined by a path of length at most $D$ (a backward jump costs $1$, a return to the previous layer costs $2$, and a return within a layer costs at most $3 \le D$). For $k = 1$ this is $T_{n,1}$ itself ($D = n-1$); for $k = 2$ and $D = 3$, equality is attained by $T_{6,2}$ ($D = 3$); for $k \ge 3$, $T_{n,k}$ cannot attain equality since $n = mk$ makes $\frac{n+k-2}{k}$ non-integral.

We shall use the following estimate, proved by Dankelmann and Volkmann in [6]:

\begin{lemma}[Dankelmann--Volkmann]
For $x \in V_i$ and $y \in V_j$,
\begin{equation}
d_T(x,y) + d_T(y,x) \le |j-i| + 5.
\end{equation}
In particular, $d_T(x,y) \le |i-j| + 4$ for $x \ne y$.
\end{lemma}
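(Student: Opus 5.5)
Since the statement is quoted from Dankelmann and Volkmann [6], one could simply cite it. To reprove it, I would fix $x \in V_i$ and $y \in V_j$ and assume by symmetry that $i \le j$. The argument rests on two structural facts about the distance layers from $u$.

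\emph{Fact (a).} An arc $a \to b$ with $a \in V_s$ and $b \in V_t$ forces $t \le s+1$. Hence, in a tournament, every vertex of $V_t$ dominates every vertex of $V_s$ whenever $s \le t-2$.

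\emph{Fact (b).} For $y \in V_j$, fix a shortest path $u = w_0 \to w_1 \to \cdots \to w_j = y$ with $w_m \in V_m$. Then $d(w_m, y) = j-m$.

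The plan is to bound $d(x,y)$ by $j-i+c_1$ and $d(y,x)$ by $c_2$, with $c_1 + c_2 \le 5$. The second inequality in the lemma then follows by taking the worse direction, since each of $d(x,y)$ and $d(y,x)$ is at least $1$.

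\emph{Bounding $d(x,y)$.} I compare $x$ with $w_{i+1}, w_i, w_{i-1}, w_{i-2}$ in that order.
\begin{itemize}
\item If $x \to w_{i+1}$, then $d(x,y) \le j-i$.
\item Otherwise, if $x \to w_i$, then $d(x,y) \le j-i+1$.
\item Otherwise, if $x \to w_{i-1}$, then $d(x,y) \le j-i+2$.
\item Otherwise $x$ is dominated by all three. If $i \ge 2$, Fact (a) gives $x \to w_{i-2}$, so $d(x,y) \le j-i+3$.
\end{itemize}
The cases $i \le 1$ are handled through an out-neighbour $z$ of $x$, which exists because $T$ is strong. Since $u \to x$, $z$ lies in $V_1$ or $V_2$. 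If $z \in V_2$, then $z \to u$ by Fact (a), giving $d(x,y) \le j+2$. If $z \in V_1$, I repeat the comparison with $z$ in place of $x$.

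\emph{Bounding $d(y,x)$.} When $j \ge i+2$, Fact (a) gives $y \to x$ directly, so $d(y,x) = 1$. Combined with $d(x,y) \le j-i+3$ (or $j-i+4$ in the degenerate low-layer case), this yields the sum bound $|j-i|+5$.

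\emph{The case $j \in \{i, i+1\}$.} Here both directions need a uniform constant bound, say $d(x,y) \le 3$ and $d(y,x) \le 2$ in some order. For this I would apply the comparison argument symmetrically. Each of $x, y$ either dominates a vertex on the other's geodesic at the same or adjacent layer, or it dominates everything at layers at most two below itself. Any vertex at such a layer reaches the other endpoint quickly via Fact (b).

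I expect the main obstacle to be this bookkeeping when $|i-j| \le 1$ and the layers are small ($i \le 2$). There the ``two layers below'' escape route of Fact (a) is unavailable, and one must use strong connectivity, via an out-neighbour of $x$ that returns to $u$, without losing more than the allotted additive constant. I would handle it by first proving the auxiliary claim that every vertex of $V_1 \cup V_2$ reaches $u$ within $3$ steps, and then feeding that into the case analysis.
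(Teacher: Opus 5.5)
\textbf{Genuine gap at $i=1$.} The paper does not prove this lemma; it only quotes it from [6], so your reproof has to stand on its own. It has a gap in the case $i=1$: you never use the standing hypothesis that the layers are taken from a vertex $u$ with $\operatorname{ecc}_T(u)=\diam(T)$. Facts (a) and (b) and your geodesic comparison hold for layers from any vertex. For $i\ge 2$ they do suffice: the single arc $x\to w_{i-2}$ gives $d(x,y)\le j-i+3$ for every $j\ge i-2$, and $d(x,y)=1$ for $j<i-2$.

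For $i=1$, however, the statement is false without the diametral hypothesis. In $T_{n,1}$ take $u=a_n$. Then $V_1=\{a_1,\dots,a_{n-2}\}$ and $V_2=\{a_{n-1}\}$, yet $d(a_1,a_{n-2})=n-3$, which exceeds $4$ once $n\ge 8$. This is exactly where your step ``if $z\in V_1$, repeat the comparison with $z$ in place of $x$'' breaks down. The repetition can walk along an arbitrarily long chain inside $V_1$, and nothing in your argument bounds the number of rounds. Your auxiliary claim that every vertex of $V_1\cup V_2$ reaches $u$ within $3$ steps is precisely what is needed. But you give no proof of it, and any proof must use the diametral hypothesis, since the claim fails for a general $u$.

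\textbf{The missing step.} Let $x\in V_1$ and $v\in V_D$; note $D\ge 2$ because $T$ is strong. Since $\operatorname{ecc}_T(x)\le D$, there is a path $x=p_0\to p_1\to\cdots\to p_L=v$ with $1\le L\le D$. Each arc raises the layer index by at most one. Hence $p_{\min(2,L)}$ lies in a layer of index at least $2$: otherwise the remaining at most $L-2$ arcs could not climb to layer $D$. By Fact (a) that vertex dominates $u$, so $d(x,u)\le 3$. Consequently $d(x,y)\le d(x,u)+d(u,y)\le 3+j=|i-j|+4$ for $j\ge 1$, and $d(x,u)\le 3$ covers $j=0$.

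\textbf{Simplifying the rest.} In a tournament exactly one of $d(x,y)$, $d(y,x)$ equals $1$. So the sum bound is equivalent to the one-sided bound $d(x,y)\le|i-j|+4$ for all ordered pairs, and your separate two-sided treatment of $j\in\{i,i+1\}$ is unnecessary. Your target there, ``$d(x,y)\le 3$ and $d(y,x)\le 2$ in some order'', is also not true in general. With $u$ diametral one can have $x,y\in V_1$ with $d(x,y)=1$ and $d(y,x)=4$, so the bound $5$ is attained only as $1+4$.
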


The next lemma is specific to tournaments and has no analogue for general digraphs. It is the key to the constant $1/(2k)$.

\begin{lemma}
Let $x \in V_i$ and $y \in V_j$ with $j \le i-2$. Then $x \to y$ is an arc of $T$; in particular $d_T(x,y) = 1$.
\end{lemma}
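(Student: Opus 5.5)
The proof is by contradiction, using the defining property of the distance layers together with the fact that in a tournament every pair of distinct vertices is joined by exactly one arc. Let $x \in V_i$ and $y \in V_j$ with $j \le i-2$. Since $i \ne j$, $x \ne y$, so exactly one of $x \to y$ or $y \to x$ holds. I will rule out $y \to x$.

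Suppose $y \to x$. Then $d_T(u,x) \le d_T(u,y) + d_T(y,x) = j + 1$, by the triangle inequality for directed distance (concatenate a shortest $u$--$y$ path with the arc $yx$). But $x \in V_i$ means $d_T(u,x) = i \ge j+2$, a contradiction. Hence $x \to y$, and therefore $d_T(x,y) = 1$.

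Only two small points need care. First, the layers $V_j$ are defined by exact distance from the fixed vertex $u$, so $d_T(u,y) = j$ is finite; this holds because $T$ is strong, and $k$-connected tournaments are strong for $k \ge 1$. Second, the case $y = u$ (that is, $j = 0$) is covered by the same argument: the arc $u \to x$ would put $x$ in $V_1$, contradicting $i \ge 2$.

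I expect no real obstacle here. The argument is the standard fact that along a BFS layering no arc can skip forward by two or more layers, combined with tournament completeness, which forces the reverse arc to exist.
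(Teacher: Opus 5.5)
Your proof is correct and follows the paper's argument: assuming $y \to x$ gives $d_T(u,x) \le j+1 < i$, which contradicts $x \in V_i$. The tournament property then forces the arc $x \to y$.
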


\begin{proof}
Suppose instead that $y \to x$. Concatenating a shortest directed $u \to y$ path (of length $d_T(u,y) = j$) with the arc $y \to x$ yields a directed $u \to x$ walk of length $j+1$, so $d_T(u,x) \le j+1$. But $d_T(u,x) = i$ and $j \le i-2$ imply $j+1 \le i-1 < i$, a contradiction. Since $T$ is a tournament, exactly one of $x \to y$, $y \to x$ holds; hence $x \to y$.
\end{proof}

\begin{remark}
Lemma 3.2 asserts that from layer $V_i$, every vertex in layers $V_0,\dots,V_{i-2}$ is an out-neighbour. Backward jumps of at least two layers are always direct arcs.
\end{remark}

\subsection{Bounding eccentricity by forward distance}

Let $x \in V_i$. We bound $d_T(x,y)$ according to the layer of $y$.

\begin{proposition}
For $x \in V_i$,
\begin{equation}
\operatorname{ecc}_T(x) \le (D-i) + 5.
\end{equation}
\end{proposition}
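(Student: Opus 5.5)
We bound $d_T(x,y)$ for every $y \in V(T)$ by splitting according to the layer $V_j$ containing $y$, and then take the maximum over $y$. The case split follows the heading of this subsection: backward targets are handled by Lemma 3.2 and Lemma 3.1, and forward targets by Lemma 3.1 alone. In each case we want the bound $d_T(x,y) \le (D-i)+5$.

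\emph{Case $j \le i-2$.} By Lemma 3.2, $x \to y$ is an arc, so $d_T(x,y) = 1 \le (D-i)+5$, since $i \le D$.

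\emph{Case $i-1 \le j \le i+1$.} Lemma 3.1 gives $d_T(x,y) \le |i-j| + 4 \le 5 \le (D-i)+5$.

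\emph{Case $j \ge i+2$.} Lemma 3.1 gives $d_T(x,y) \le (j-i)+4$. Since $j \le D$, this is at most $(D-i)+4 < (D-i)+5$.

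The case $y=x$ is trivial. Taking the maximum over all $y$ yields $\operatorname{ecc}_T(x) \le (D-i)+5$.

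There is no real obstacle here; the argument is just bookkeeping once Lemma 3.1 is available. The only point needing care is that Lemma 3.1 is stated for arbitrary $x \in V_i$ and $y \in V_j$ with the layering taken from the diametral vertex $u$, so it applies uniformly to every pair. The slack in the forward case suggests the constant could be improved to $4$, but the statement only needs $5$.

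The bound can also be obtained in one line from Lemma 3.1: $d_T(x,y) \le |i-j|+4 \le \max(i, D-i)+4$. This gives only $(D-i)+4$ when $i \le D/2$, so the case split using Lemma 3.2 is what controls the backward distances uniformly for $i > D/2$.
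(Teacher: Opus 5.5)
Your proof is correct and follows the paper's argument essentially verbatim: you split on the layer $V_j$ containing $y$, use Lemma 3.2 for $j \le i-2$, and use the bound $d_T(x,y)\le |i-j|+4$ from Lemma 3.1 for the rest (you only place $j=i+1$ in the middle case instead of the forward one). One caveat on your aside: the forward slack alone does not give the constant $4$, since for $i=D$ and $j=D-1$ Lemma 3.1 yields only $d_T(x,y)\le 5=(D-i)+5$.
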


\begin{proof}
Let $y \in V_j$.
\begin{itemize}
\item If $j \le i-2$: $d_T(x,y) = 1$ by Lemma 3.2.
\item If $j \in \{i-1,i\}$: $d_T(x,y) \le |i-j| + 4 \le 5$ by Lemma 3.1.
\item If $j > i$: $d_T(x,y) \le (j-i) + 4 \le (D-i) + 4$ by Lemma 3.1, since $j \le D$.
\end{itemize}
In each case $d_T(x,y) \le (D-i) + 5$ (using $D-i \ge 0$). Taking the maximum over $y$ gives (5).
\end{proof}

\begin{remark}
Proposition 3.2 bounds eccentricity by the forward distance $D-i$ plus a constant. The backward direction contributes only $O(1)$, by Lemma 3.2. This is strictly stronger than the symmetric path estimate $\max(i, D-i) + O(1)$.
\end{remark}

Summing (5) over all vertices yields:
\begin{equation}
n \cdot \avec(T) = \sum_x \operatorname{ecc}_T(x) \le \sum_{i=0}^D n_i(D-i) + 5n.
\end{equation}

\begin{lemma}
\[
\sum_{i=0}^D n_i(D-i) \le \frac{n^2}{2k} + \frac{n}{2} + \frac{k}{8}.
\]
\end{lemma}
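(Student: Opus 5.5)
Rewrite the sum by exchanging the order of summation:
\[
\sum_{i=0}^D n_i(D-i)=\sum_{t=1}^{D}\, \sum_{i<t} n_i = \sum_{t=1}^{D} N_{t-1},\qquad N_s:=n_0+\dots+n_s .
\]
Thus the quantity is the total of the partial sums $N_0,\dots,N_{D-1}$. Every vertex in layers $s+1,\dots,D$ lies outside $N_s$. By (1), the layers $V_{s+1},\dots,V_{D-1}$ each contain at least $k$ vertices, and $n_D\ge 1$. Hence
\[
N_s \le n-1-k(D-1-s),
\]
and of course $N_s\le n$.

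The plan is to bound each term by $N_{t-1}\le n-1-k(D-t)$. Summing over $t=1,\dots,D$ gives
\[
\sum_{t=1}^D N_{t-1}\le D(n-1)-k\frac{D(D-1)}{2}=:f(D).
\]
Now treat $D$ as a real variable. The function $f$ is a concave quadratic,
\[
f(D)=-\frac{k}{2}D^2+\Bigl(n-1+\frac{k}{2}\Bigr)D .
\]
Its maximum over the reals is
\[
\frac{(n-1+k/2)^2}{2k}=\frac{(n-1)^2}{2k}+\frac{n-1}{2}+\frac{k}{8}.
\]
This is at most $\frac{n^2}{2k}+\frac n2+\frac k8$, because $(n-1)^2\le n^2$ and $n-1<n$. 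This proves the lemma. Proposition 3.1 is not even needed, although it guarantees that the terms $n-1-k(D-t)$ are meaningful, i.e.\ nonnegative, for $t\ge1$.

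The only delicate step is the per-layer bound on $N_s$. We must use (1) for the interior layers only and $n_D\ge1$ for the last layer. We must also check the boundary case $s=D-1$: there $N_{D-1}=n-n_D\le n-1$, which is consistent with the formula. Everything else is routine completion of the square. If one prefers to avoid the real relaxation, the same conclusion holds because the integer maximum of $f$ never exceeds the real maximum.
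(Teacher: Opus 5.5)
Your proof is correct and is essentially the paper's argument: your bound $f(D)=D(n-1)-k\frac{D(D-1)}{2}$, obtained via partial sums, is exactly the paper's intermediate bound $Dn-\frac{kD(D-1)}{2}-D=g(D)-D$, which the paper derives from $\sum_i n_i(D-i)=Dn-\sum_i i\,n_i$ using the same layer estimates. The only difference is that you maximize $g(D)-D$ directly, getting the marginally sharper $\frac{(n-1)^2}{2k}+\frac{n-1}{2}+\frac{k}{8}$, whereas the paper discards the $-D$ and maximizes $g$.
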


\begin{proof}
Since $\sum_{i=0}^D n_i(D-i) = Dn - \sum_{i=0}^D i n_i$ and $n_0 \cdot 0 = 0$, (1) together with $n_D \ge 1$ gives
\[
\sum_{i=0}^D i n_i \ge \sum_{i=1}^{D-1} i n_i + D n_D \ge k \sum_{i=1}^{D-1} i + D = \frac{kD(D-1)}{2} + D.
\]
Hence
\[
\sum_{i=0}^D n_i(D-i) \le Dn - \frac{kD(D-1)}{2} - D = g(D) - D < g(D),
\]
where $g(D) := Dn - \frac{k}{2}D^2 + \frac{k}{2}D$. The function $g(D) = -\frac{k}{2}D^2 + \left(n + \frac{k}{2}\right)D$ is quadratic in $D$ whose leading coefficient is negative, with first derivative
\[
g'(D) = -kD + \left(n + \frac{k}{2}\right),
\]
and second derivative $g''(D) = -k$. So,
\[
g(D) = -\frac{k}{2}(D - D^*)^2 + g(D^*),
\]
with equality if and only if $D = D^*$. Setting the first derivative to zero shows that $g(D)$ is maximum at $D^* = \frac{n}{k} + \frac{1}{2}$, of value
\[
g(D^*) = -\frac{k}{2}[D^*]^2 + \left(n + \frac{k}{2}\right)D^* = \frac{n^2}{2k} + \frac{n}{2} + \frac{k}{8}.
\]
Since $g(D) - D < g(D) \le g(D^*)$, the lemma follows, and in fact $\sum_{i=0}^D n_i(D-i) < g(D^*)$ always holds.
\end{proof}

\subsection{Bounds on $k$-connected tournaments}

\begin{theorem}
Let $T$ be a $k$-connected tournament of order $n \ge 3$. Then
\begin{equation}
\avec(T) < \frac{n}{2k} + \frac{11}{2} + \frac{k}{8n}.
\end{equation}
This bound is best possible apart from the additive constant.
\end{theorem}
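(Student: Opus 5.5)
The upper bound follows by chaining the estimates already in place. By Proposition 3.2, every $x \in V_i$ satisfies $\operatorname{ecc}_T(x) \le (D-i)+5$. Summing over all vertices gives (6):
\[
n\cdot\avec(T) \le \sum_{i=0}^D n_i(D-i) + 5n .
\]
Lemma 3.3 then gives the strict inequality $\sum_{i=0}^D n_i(D-i) < \frac{n^2}{2k}+\frac{n}{2}+\frac{k}{8}$. Dividing by $n$ yields
\[
\avec(T) < \frac{n}{2k}+\frac12+\frac{k}{8n}+5 = \frac{n}{2k}+\frac{11}{2}+\frac{k}{8n}.
\]
The strictness comes from the term $-D$ in the proof of Lemma 3.3, since $D \ge 1$. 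I would also check that the layer setup is legitimate for every $k$-connected tournament of order $n \ge 3$. Strong connectivity makes all distances finite, so the layers $V_0,\dots,V_D$ partition $V(T)$. Inequality (1) needs $n_i \ge k$ only for $1\le i\le D-1$, which Menger's theorem supplies because $V_i$ separates $u$ from $V_D$.

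For "best possible apart from the additive constant", I would exhibit a family of $k$-connected tournaments with $\avec(T) \ge \frac{n}{2k} - c$ for an absolute constant $c$. The natural candidate is the layered tournament described after Proposition 3.1. It has layers $L_0=\{u\}$, $L_1,\dots,L_{D-1}$ of size $k$ and $L_D=\{v\}$, complete forward arcs $L_i\to L_{i+1}$, complete backward arcs $L_i \to L_j$ for $j\le i-2$, and transitive layers. Its $k$-connectivity was already argued there, with $n = 2+k(D-1)$. The key claim is that for $x\in L_i$ we have $\operatorname{ecc}(x) \ge d(x,v) \ge D-i$. This holds because the only arcs leaving $L_i$ into higher layers go to $L_{i+1}$, so any path gains at most one layer per step.

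Averaging gives
\[
\avec(T) \ge \frac1n\Big(D + k\sum_{i=1}^{D-1}(D-i)\Big) = \frac{1}{n}\Big(D+\frac{kD(D-1)}{2}\Big).
\]
Since $D = \frac{n-2}{k}+1$, this is $\frac{n}{2k}+O(1)$, with the $O(1)$ term bounded independently of $n$ and $k$. Comparing with the upper bound $\frac{n}{2k}+\frac{11}{2}+\frac{k}{8n}$, where $k/(8n)\le 1/8$, the gap is a bounded constant. When $n$ is not of the form $2+k(D-1)$, I would pad the last interior layer with the extra vertices. Extra vertices in a layer only preserve $k$-connectivity and change the sum by $O(1)\cdot$(a few vertices)$/n$.

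The main obstacle is this lower-bound construction. Specifically, one must confirm that $d(x,v)=D-i$ exactly, with no shortcuts. Every arc from $L_i$ goes to $L_{i+1}$, to $L_{i-1}$ (within-pair arcs, by the complete forward orientation, actually point from $L_{i-1}$ into $L_i$), to $L_j$ with $j \le i-2$, or within $L_i$. So no arc skips forward, and the distance claim follows. I would write out that case check carefully, including the padding case and $D\le 3$, where small $n$ is absorbed into the constant.
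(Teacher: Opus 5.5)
Your upper-bound argument is exactly the paper's: sum Proposition 3.2 to get (6), apply Lemma 3.3, and take strictness from the $-D$ term in its proof.

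For sharpness you use the layered tournament described after Proposition 3.1 instead of the paper's $T_{n,k}$ from Proposition 3.3. This is a legitimate substitute. Forward arcs advance only one layer, so $d(x,v)\ge D-i$ for $x\in L_i$. With $n=2+k(D-1)$ your sum then collapses to exactly $\avec(T)\ge D/2=\frac{n}{2k}+\frac12-\frac1k$. This holds for $D\ge 4$, which is where the paper's $k$-connectivity check for that construction applies. The one-sided estimate is all that sharpness requires. By contrast, the paper computes the eccentricities of $T_{n,k}$ from both sides and needs $k\mid n$.

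The padding to general $n$ is optional, since an infinite family suffices and the paper itself only treats $k\mid n$. If you keep it, your quantitative description is off but harmless. Putting $r<k$ extra vertices into $L_{D-1}$ lowers the bound from $D/2$ by $\frac{r(D-2)}{2n}<\frac{r}{2k}<\frac12$, not by $O(r/n)$. That is still an additive constant, so the conclusion stands.
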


\begin{proof}
From (6) and Lemma 3.3,
\[
n \cdot \avec(T) \le \frac{n^2}{2k} + \frac{n}{2} + \frac{k}{8} + 5n,
\]
i.e.\
\[
\avec(T) \le \frac{n}{2k} + \frac{11}{2} + \frac{k}{8n}.
\]
The inequality is strict: the proof of Lemma 3.3 gives $\sum_{i=0}^D n_i(D-i) \le g(D) - D < g(D^*)$, so from (6),
\[
n \cdot \avec(T) < \frac{n^2}{2k} + \frac{n}{2} + \frac{k}{8} + 5n,
\]
hence
\[
\avec(T) < \frac{n}{2k} + \frac{11}{2} + \frac{k}{8n},
\]
proving (7).
\end{proof}

\begin{figure}[htbp]
\centering
\begin{tikzpicture}[scale=0.85, v/.style={circle, draw, minimum size=0.8cm, inner sep=1pt, font=\small}]
\foreach \i/\x in {1/0, 2/3.5, 3/7} {
  \foreach \j/\y in {1/1.2, 2/-1.2} {
    \node[v] (a\i\j) at (\x,\y) {$a_{\i,\j}$};
  }
}
\foreach \j in {1,2} {
  \foreach \k in {1,2} {
    \draw[->, blue, thick] (a1\j) -- (a2\k);
    \draw[->, blue, thick] (a2\j) -- (a3\k);
    \draw[->, red, thick] (a3\j) to[bend left=12] (a1\k);
  }
}
\foreach \i in {1,2,3} {
  \draw[->, green!60!black, thick] (a\i1) -- (a\i2);
}
\node at (0,-2.3) {Layer 1};
\node at (3.5,-2.3) {Layer 2};
\node at (7,-2.3) {Layer 3};
\end{tikzpicture}
\caption{The tournament $T_{6,2}$: a 2-connected tournament on 6 vertices. Blue: forward arcs; red: backward skip-arcs; green: transitive tournament within each layer.}
\end{figure}

To see that the bound in Theorem 3.1 is sharp apart from an additive constant, we prove sharpness for the tournament $T_{n,k}$ using the proposition described in [6] and compute its average eccentricity.
\newpage
\begin{proposition}
Let $k \ge 1$ and let $n$ be a multiple of $k$ with $n \ge 3k$; write $m = n/k$ (so $m \ge 3$). Let
\[
V(T_{n,k}) = \{a_{i,j} : 1 \le i \le m, 1 \le j \le k\},
\]
with arcs $a_{i,j} \to a_{i',j'}$ if and only if one of the following holds:
\begin{itemize}
\item[(i)] $i' = i+1$ (forward to the next layer);
\item[(ii)] $i' \le i-2$ (backward skip-arcs);
\item[(iii)] $i' = i$ and $j < j'$ (transitive tournament within each layer).
\end{itemize}
Then $T_{n,k}$ is a tournament, it is $k$-connected, and
\begin{equation}
\avec(T_{n,k}) = \frac{n}{2k} + O(1).
\end{equation}
\end{proposition}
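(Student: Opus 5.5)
The plan is to verify the three assertions in order: that $T_{n,k}$ is a tournament, that it is $k$-connected, and the asymptotic value of the average eccentricity. The first two are short case checks. The real content is computing $\operatorname{ecc}(a_{i,j})$ exactly up to an additive constant, namely
\[
m-i \le \operatorname{ecc}(a_{i,j}) \le \max(m-i,3).
\]

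\emph{Tournament.} Take two vertices $a_{i,j}, a_{i',j'}$ with $i \le i'$.
\begin{itemize}
\item If $i=i'$, only rule (iii) applies, and it orients the pair by comparing $j$ and $j'$.
\item If $i'=i+1$, rule (i) gives $a_{i,j}\to a_{i',j'}$. The reverse is not produced by any rule, since $i \le i'-2$ fails and (iii) needs equal layers.
\item If $i'\ge i+2$, rule (ii) gives $a_{i',j'}\to a_{i,j}$, and no rule gives the opposite arc.
\end{itemize}
So exactly one arc joins each pair.

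\emph{$k$-connectivity.} Delete a set $S$ with $|S|<k$. Every layer keeps at least one vertex; fix a survivor $s_\ell$ in each layer $\ell$.
\begin{itemize}
\item From any surviving vertex in layer $i$, the forward chain $s_{i+1},\dots,s_m$ reaches every surviving vertex in a later layer.
\item From $s_m$ the arc to $s_1$ exists because $m\ge 3$. From there the forward chain reaches any surviving vertex in any layer, including other vertices of the starting layer.
\end{itemize}
Hence $T_{n,k}-S$ is strong.

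\emph{Eccentricities.} Fix $x=a_{i,j}$.
\begin{itemize}
\item \emph{Lower bound.} Every arc increases the layer index by at most one. Hence $d(x,a_{m,1})\ge m-i$, and this distance is attained along the forward chain. So $\operatorname{ecc}(x)\ge m-i$.
\item \emph{Targets in layers $\le i-2$.} These are at distance $1$ by (ii).
\item \emph{Targets in layer $i-1$.} Use $x\to a_{i+1,\cdot}\to a_{i-1,\cdot}$ when $i<m$, and $x\to a_{m-2,\cdot}\to a_{m-1,\cdot}$ when $i=m$. Either way the distance is at most $2$.
\item \emph{Targets in layer $i$.} Use $i\to i+1\to i-1\to i$ for $1<i<m$. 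In the boundary cases use $1\to2\to3\to1$ and $m\to m-2\to m-1\to m$, both valid because $m\ge3$. So the distance is at most $3$.
\end{itemize}
Combined with the forward distances, this gives $m-i\le\operatorname{ecc}(a_{i,j})\le \max(m-i,3)$.

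\emph{Averaging.} Each layer has $k$ vertices, so
\[
\avec(T_{n,k})=\frac1m\sum_{i=1}^m \operatorname{ecc}(a_{i,\cdot}),
\]
which lies between $\frac{m-1}{2}$ and $\frac{m-1}{2}+3$. Since $m=n/k$, this gives $\frac{n}{2k}+O(1)$. That matches Theorem 3.1's leading term and shows the bound is sharp up to the additive constant.

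The main obstacle is the boundary layers $i\in\{1,m\}$ and small $m$. The short return paths to layers $i-1$ and $i$ require $m\ge3$, and this has to be checked case by case. It is easiest to handle by listing explicit paths rather than appealing to Lemma 3.1.
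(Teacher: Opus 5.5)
Your proposal is correct and follows essentially the same route as the paper. It checks the tournament axioms by cases, shows strong connectivity of $T_{n,k}-S$ via forward chains and skip arcs, bounds $m-i\le\operatorname{ecc}(a_{i,j})\le\max(m-i,3)$, and averages. Two phrasings need tightening, though neither affects validity: surviving vertices of layer $1$ other than $s_1$ are reached by direct arcs from $s_m$, not by the forward chain from $s_1$; and eccentricity is not constant on a layer (e.g.\ $a_{i,1}$ versus $a_{i,2}$), but your bounds are uniform in $j$, so the averaging still holds.
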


\begin{proof}
For distinct vertices $a_{i,j}, a_{i',j'}$: if $i' = i$, exactly one of $j < j'$, $j' < j$ holds, giving exactly one of the arcs in (iii); if $i' = i+1$, (i) gives $a_{i,j} \to a_{i+1,j'}$ and none of (i)--(iii) gives the reverse; if $i' = i-1$, (i) applied to the pair $(a_{i-1,j'}, a_{i,j})$ gives $a_{i-1,j'} \to a_{i,j}$, since $i = (i-1)+1$; the reverse $a_{i,j} \to a_{i-1,j'}$ would require (i) ($i-1 = i+1$, false), (ii) ($i-1 \le i-2$, false), or (iii) (different layers, false). So exactly one direction holds. If $|i-i'| \ge 2$, say $i' \le i-2$, then (ii) gives $a_{i,j} \to a_{i',j'}$; the reverse would need $i \le i'-2 \le i-4$ (false), $i = i'+1$ (false since $i-i' \ge 2$), or same layer (false). Thus exactly one arc joins each pair: $T_{n,k}$ is a tournament.

Each layer has $k$ vertices. Deleting fewer than $k$ vertices leaves every layer nonempty. The surviving digraph is strongly connected: if the target layer is ahead, follow forward arcs through surviving vertices; if it is at least two layers behind, use a direct skip arc; if it is the previous layer, go forward one layer and skip back (or, from the last layer, skip back two layers and go forward, using $m \ge 3$); and within a layer, the same forward-and-skip detours give a path of length at most $3$. Hence $T_{n,k}$ is $k$-connected. (This is the argument of [6].)

We compute eccentricities. Let $x = a_{i,j}$ (1-indexed layers).
\begin{itemize}
\item To reach layer $m$: the only forward arcs go to the next layer, so $d(x, a_{m,j'}) \ge m-i$ for $i < m$, and the path $a_{i,j} \to a_{i+1,1} \to \cdots \to a_{m,j'}$ achieves $m-i$. Hence $\max_{j'} d(x, a_{m,j'}) = m-i$ for $i < m$, and at most $3$ for $i = m$ (see below).
\item To reach layer $\ell < i$: if $\ell \le i-2$, $d = 1$ by (ii) (consistent with Lemma 3.2); if $\ell = i-1$, then $x \to a_{i+1,1} \to a_{i-1,j'}$ has length $2$ when $i < m$, and $x \to a_{m-2,1} \to a_{m-1,j'}$ has length $2$ when $i = m$ (using $m \ge 3$).
\item Within layer $i$: $d \le 3$. Indeed, if $j' \ge j$ this is direct (or $0$ if $j' = j$); if $j' < j$, go $a_{i,j} \to a_{i+1,1} \to a_{i+2,1} \to a_{i,j'}$ when $i \le m-2$, $a_{m-1,j} \to a_{m,1} \to a_{m-2,1} \to a_{m-1,j'}$ when $i = m-1$, and $a_{m,j} \to a_{m-2,1} \to a_{m-1,1} \to a_{m,j'}$ when $i = m$.
\end{itemize}
Consequently
\[
\operatorname{ecc}(a_{i,j}) = (m-i) + O(1), \qquad 1 \le i \le m,
\]
the maximum being attained (up to $O(1)$) at the last layer. Averaging over the $m$ layers ($k$ vertices each),
\[
\avec(T_{n,k}) = \frac{1}{m} \sum_{i=1}^m (m-i) + O(1) = \frac{1}{m} \cdot \frac{m(m-1)}{2} + O(1) = \frac{m-1}{2} + O(1) = \frac{n}{2k} + O(1),
\]
proving (8). Thus the constant $1/(2k)$ in Theorem 3.1 cannot be improved.
\end{proof}

\begin{remark}
The computation above is the exact analogue for average eccentricity of the sharpness computation in [6] for mean distance, where $\mu(T_{n,k}) = \frac{n}{6k} + O(1)$. The tournament $T_{n,k}$ is simultaneously extremal for both parameters. Here, $T_{n,k}$ has minimum out-degree exactly $k$ ($\delta^+ = k$), so the $\frac{1}{2k}$ coefficient is best possible even for tournaments of minimum out-degree $\ge k$.
\end{remark}

\begin{figure}[htbp]
\centering
\begin{tikzpicture}[scale=0.85, v/.style={circle, draw, minimum size=0.8cm, inner sep=1pt, font=\small}]
\foreach \i/\x in {1/0, 2/3.5, 3/7} {
  \foreach \j/\y in {1/1.7, 2/0, 3/-1.7} {
    \node[v] (a\i\j) at (\x,\y) {$a_{\i,\j}$};
  }
}
\foreach \j in {1,2,3} {
  \foreach \k in {1,2,3} {
    \draw[->, blue, thick] (a1\j) -- (a2\k);
    \draw[->, blue, thick] (a2\j) -- (a3\k);
    \draw[->, red, thick] (a3\j) to[bend left=10] (a1\k);
  }
}
\foreach \i in {1,2,3} {
  \draw[->, green!60!black, thick] (a\i1) -- (a\i2);
  \draw[->, green!60!black, thick] (a\i2) -- (a\i3);
  \draw[->, green!60!black, thick] (a\i1) to[bend left=30] (a\i3);
}
\node at (0,-2.8) {Layer 1};
\node at (3.5,-2.8) {Layer 2};
\node at (7,-2.8) {Layer 3};
\end{tikzpicture}
\caption{The tournament $T_{9,3}$: a 3-connected tournament on 9 vertices (three layers of three). Blue: complete bipartite forward arcs; red: backward skip-arcs from layer 3 to layer 1; green: transitive tournament $a_{i,1} \to a_{i,2} \to a_{i,3}$ within each layer.}
\end{figure}

\subsection{Bounds on strong tournaments}

\begin{theorem}
Let $T$ be a strong tournament of order $n \ge 3$. Then
\[
\avec(T) \ge 2,
\]
and the constant $2$ is best possible.
\end{theorem}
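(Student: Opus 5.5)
The plan is to show that every vertex of a strong tournament $T$ of order $n\ge 3$ has eccentricity at least $2$. Averaging then gives $\avec(T)\ge 2$. Afterwards I will give a family of strong tournaments in which every vertex has eccentricity exactly $2$, which shows that the constant cannot be raised.

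\emph{Lower bound.} Fix $v\in V(T)$. Certainly $\operatorname{ecc}_T(v)\ge 1$, since $n\ge 2$. Suppose $\operatorname{ecc}_T(v)=1$. Then $d_T(v,w)=1$ for every $w\ne v$, so $v\to w$ for all $w\ne v$ and $v$ has in-degree $0$. No directed path can then reach $v$ from any other vertex, which contradicts strong connectivity; here $n\ge 3>1$ guarantees that another vertex exists. Hence $\operatorname{ecc}_T(v)\ge 2$ for every $v$, and summing over $V(T)$ and dividing by $n$ gives $\avec(T)\ge 2$. Equality holds exactly when $\diam(T)=2$, although only the inequality is needed here.

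\emph{Sharpness.} It suffices to show that for infinitely many $n$ (ideally all $n\ge 3$ with finitely many exceptions) there is a strong tournament of diameter $2$. The first candidate is the rotational tournament on $\mathbb{Z}_n$ with $n$ odd, where $i\to j$ iff $j-i\in\{1,\dots,(n-1)/2\}$ modulo $n$. It is regular and vertex-transitive, so it suffices to check vertex $0$. Its out-neighbours are $1,\dots,(n-1)/2$. Every in-neighbour $j\in\{(n+1)/2,\dots,n-1\}$ is reached in two steps via $(n-1)/2\to j$, since $j-(n-1)/2\in\{1,\dots,(n-1)/2\}$. So $\operatorname{ecc}=2$ for all vertices and $\avec=2$.

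For even $n\ge 4$, a simple route is to add one vertex $z$ to the rotational tournament on $n-1$ vertices. For $n=4$ this fails (no strong tournament on $4$ vertices has diameter $2$), but $n=3$ and all odd $n$ already suffice to show the constant is best possible. For completeness I would first try to orient $z$'s arcs so that $z$ beats about half the vertices and loses to the rest, and then verify diameter $2$ for $n\ge 6$ directly.

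The only step needing care is this even-order construction. Since sharpness only requires infinitely many $n$, the odd case alone settles the claim, and the lower bound is immediate.
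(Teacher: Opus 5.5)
Your proof is correct and follows the paper's argument: eccentricity $1$ would force in-degree $0$, contradicting strong connectivity, and sharpness comes from the rotational tournament on $\mathbb{Z}_{2m+1}$. The only difference is that you route every in-neighbour of $0$ through the single vertex $m$, where the paper uses $i+t$ for $d=m+t$, which is immaterial; your aside on even orders is unnecessary, and your remark that no strong tournament of order $4$ has diameter $2$ is correct.
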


\begin{proof}
Let $v \in V(T)$. If $\operatorname{ecc}_T(v) = 1$, then $v$ dominates every other vertex, so $v$ has in-degree $0$ and there is no directed path from any other vertex to $v$, contradicting the strong connectivity of $T$. Hence $\operatorname{ecc}_T(v) \ge 2$ for every $v \in V(T)$, and averaging over $V(T)$ gives
\[
\avec(T) = \frac{1}{n} \sum_{v \in V(T)} \operatorname{ecc}_T(v) \ge 2.
\]
In particular, this holds for $k$-connected tournaments, since every $k$-connected tournament is strong.

We now show that the constant $2$ cannot be improved. Let $n = 2m+1$ with $m \ge 1$, and let $T$ be the tournament with vertex set $\mathbb{Z}_n$ in which $i \to j$ if and only if $j-i \pmod n \in \{1,\dots,m\}$. This is indeed a tournament: for distinct $i,j$, the residues $j-i$ and $i-j$ sum to $n = 2m+1$, so exactly one of them lies in $\{1,\dots,m\}$. We claim that $\operatorname{ecc}_T(i) = 2$ for every $i$. Let $j \ne i$ and let $d \equiv j-i \pmod n$ with $1 \le d \le 2m$. If $1 \le d \le m$, then $i \to j$ directly. If $m+1 \le d \le 2m$, write $d = m+t$ with $1 \le t \le m$; then $i \to i+t$ and $(i+t) \to j$, since $j-(i+t) \equiv m \pmod n$, so $d_T(i,j) = 2$. In particular, for $j$ with $d = m+1$, we have $d_T(i,j) = 2$, so $\operatorname{ecc}_T(i) \ge 2$; combined with $d_T(i,j) \le 2$ for all $j$, we get $\operatorname{ecc}_T(i) = 2$ for every $i$. Consequently
\[
\avec(T) = \frac{1}{n} \sum_{i \in V(T)} 2 = 2.
\]
Hence no constant larger than $2$ can serve as a universal lower bound for the average eccentricity of strong tournaments, so the bound $2$ is best possible. Moreover, taking $m = k$, this tournament has minimum in- and out-degree $k$ and is $k$-connected (for tournaments, the vertex-connectivity equals the minimum in-/out-degree), so the bound $2$ is best possible for $k$-connected tournaments of each fixed connectivity as well.
\end{proof}

\begin{theorem}
Let $T$ be a strong tournament of order $n \ge 3$. Then
\begin{equation}
\avec(T) \le \frac{n+11}{2} + \frac{1}{8n}.
\end{equation}
The upper bound is best possible apart from the additive constant.
\end{theorem}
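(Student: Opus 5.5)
The plan is to obtain the upper bound as the special case $k=1$ of Theorem 3.1, and to obtain sharpness from Proposition 3.3 with $k=1$. A strong tournament is exactly a $1$-connected tournament, as noted in the Preliminaries. So every result proved for $k$-connected tournaments applies with $k=1$.

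For the bound, I would not simply quote the final inequality. Instead I would rerun the chain (6) together with Lemma 3.3 at $k=1$, to make the constants explicit. Proposition 3.2 gives $\operatorname{ecc}_T(x)\le (D-i)+5$ for $x\in V_i$. Summing over all vertices gives
\[
n\,\avec(T)\le \sum_{i=0}^D n_i(D-i)+5n .
\]
Lemma 3.3 with $k=1$ gives
\[
\sum_{i=0}^D n_i(D-i)\le \frac{n^2}{2}+\frac n2+\frac18 .
\]
This inequality uses only $n_i\ge 1$ for $1\le i\le D-1$, which is (1) with $k=1$, and $n_D\ge 1$. Dividing by $n$ yields
\[
\avec(T)\le \frac n2+\frac12+5+\frac1{8n}=\frac{n+11}{2}+\frac1{8n}.
\]
The proof of Lemma 3.3 in fact gives strict inequality, but only the weak form is claimed here.

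For sharpness up to an additive constant, I would take $k=1$ in Proposition 3.3, which gives the tournament $T_{n,1}$ for $n\ge 3$. It is strong, and its eccentricities are $\operatorname{ecc}(a_{i,1})=(n-i)+O(1)$. Hence $\avec(T_{n,1})=\frac{n-1}{2}+O(1)=\frac n2+O(1)$. The upper bound is $\frac n2+O(1)$ as well, so the gap is bounded by an absolute constant. This shows that the coefficient $\frac12$ of $n$ cannot be lowered.

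The only step that needs care is checking that the $O(1)$ terms in Proposition 3.3 are uniform in $n$ when $k=1$. The eccentricity analysis there bounds every non-forward distance by $3$ and every forward distance by exactly $m-i$. So $m-i\le \operatorname{ecc}(a_{i,1})\le m-i+3$ holds uniformly, and the additive error is at most a fixed constant. I expect no genuine obstacle beyond this bookkeeping.
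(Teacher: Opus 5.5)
Your proposal is correct and follows the paper's proof essentially verbatim. The paper also gets the upper bound as the $k=1$ case of Theorem 3.1, unpacked through Proposition 3.2 and Lemma 3.3 (which needs only $n_i\ge 1$ for interior layers and $n_D\ge 1$), and gets sharpness from $T_{n,1}$ in Proposition 3.3 with $\avec(T_{n,1})=\frac n2+O(1)$; your check that $m-i\le\operatorname{ecc}(a_{i,1})\le m-i+3$ uniformly in $n$ just makes the paper's $O(1)$ bookkeeping explicit.
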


\begin{remark}
A tournament is strong (strongly connected) if and only if it is 1-connected: 1-connectivity means the digraph remains strongly connected after deleting any set of fewer than 1 vertices, i.e.\ after deleting nothing, which is precisely strong connectivity. Thus Theorem 3.3 is the case $k = 1$ of Theorem 3.1.
\end{remark}

\begin{proof}[Proof of Theorem 3.3]
We prove the upper bounds using already obtained propositions and lemmas.

Since strong = 1-connected, Theorem 3.1 with $k = 1$ gives $\avec(T) < \frac{n}{2} + \frac{11}{2} + \frac{1}{8n}$, and a fortiori the non-strict inequality
\[
\avec(T) \le \frac{n}{2} + \frac{11}{2} + \frac{1}{8n}.
\]
For the reader's convenience we sketch the $k = 1$ specialization directly. Let $D = \diam(T)$ and $V_i, n_i$ as in Section 3. Strong connectivity gives $n_i \ge 1$ for $1 \le i \le D-1$ (every interior layer is nonempty). By Proposition 3.2, $\operatorname{ecc}_T(x) \le (D-i) + 5$ for $x \in V_i$, so
\[
n \cdot \avec(T) \le \sum_{i=0}^D n_i(D-i) + 5n.
\]
Now $\sum_{i=0}^D n_i(D-i) = Dn - \sum_{i=0}^D i n_i$ and, $n_i \ge 1$ being automatic, applying Lemma 3.3 with $k = 1$ gives:
\[
\sum_{i=0}^D n_i(D-i) \le \frac{n^2}{2} + \frac{n}{2} + \frac{1}{8}.
\]
Indeed, with $k = 1$ the concave function is $g(D) = -\frac{1}{2}D^2 + \left(n + \frac{1}{2}\right)D$, maximized at $D^* = n + \frac{1}{2}$ with value $\frac{n^2}{2} + \frac{n}{2} + \frac{1}{8}$. Hence
\[
n \cdot \avec(T) \le \frac{n^2}{2} + \frac{n}{2} + \frac{1}{8} + 5n,
\]
i.e.\
\[
\avec(T) \le \frac{n}{2} + \frac{11}{2} + \frac{1}{8n},
\]
as claimed. (The strict inequality of Theorem 3.1 also holds here; we state the non-strict form to match Plesn\'ik's $\le$.)

Sharpness. Proposition 3.3 with $k = 1$ yields the strong tournament $T_{n,1}$: $n$ layers of size $1$, i.e.\ the ``path tournament'' $a_1 \to a_2 \to \cdots \to a_n$ with all backward arcs $a_i \to a_j$ for $j \le i-2$. A direct computation gives
\[
\avec(T_{n,1}) = \frac{1}{n} \sum_{i=1}^n (n-i) + O(1) = \frac{n-1}{2} + O(1) = \frac{n}{2} + O(1),
\]
so the leading constant $\frac{1}{2}$ cannot be improved. Up to isomorphism, $T_{n,1}$ is the unique strong tournament of diameter $n-1$, exactly as in Plesn\'ik's theorem.

Note that Theorem 3.3 is the average-eccentricity analogue of Plesn\'ik's theorem [10] (Theorem 1 in [6]), which states $\frac{3}{2} \le \mu(T) \le \frac{n+4}{6} + \frac{1}{n}$ for strong tournaments. Plesn\'ik further showed $\mu(T) = \frac{3}{2}$ iff $\diam(T) = 2$, and the upper bound is attained iff $T$ is the unique strong tournament of diameter $n-1$ (which is $T_{n,1}$). The same extremal $T_{n,1}$ shows our $\frac{n}{2}$ upper bound for $\avec$ to be best possible.
\end{proof}

\begin{figure}[htbp]
\centering
\begin{tikzpicture}[scale=0.9, v/.style={circle, draw, minimum size=0.8cm, inner sep=1pt, font=\small}]
\foreach \i/\x in {1/0, 2/2.2, 3/4.4, 4/6.6, 5/8.8} {
  \node[v] (a\i) at (\x,0) {$a_\i$};
  \node at (\x,1.1) {Layer \i};
}
\foreach \i/\j in {1/2, 2/3, 3/4, 4/5} {
  \draw[->, blue, thick] (a\i) -- (a\j);
}
\draw[->, red, thick] (a3) to[bend left=25] (a1);
\draw[->, red, thick] (a4) to[bend left=25] (a2);
\draw[->, red, thick] (a5) to[bend left=25] (a3);
\draw[->, red, thick] (a4) to[bend left=45] (a1);
\draw[->, red, thick] (a5) to[bend left=45] (a2);
\draw[->, red, thick] (a5) to[bend left=60] (a1);
\end{tikzpicture}
\caption{The tournament $T_{5,1}$: a strong tournament ($k = 1$) on 5 vertices. Blue arrows go forward to the next layer; red arrows are backward skip-arcs to layers at distance at least two.}
\end{figure}

\section{Concluding remarks}

Proposition 3.3 shows that $T_{n,k}$ attains the leading term $n/(2k)$ in Theorem 3.1. In fact, the eccentricities computed in the proof of Proposition 3.3 give the exact value of $\avec(T_{n,k})$. Indeed, for $m = n/k \ge 3$, a vertex $a_{i,j}$ in layer $i$ has eccentricity $\max(m-i,2)$ if $j = 1$ and $\max(m-i,3)$ if $j > 1$; summing over all $i$ and $j$ gives
\[
\avec(T_{n,k}) = \frac{n}{2k} - \frac{1}{2} + \frac{6k-3}{n}.
\]
We conjecture that this value is the largest possible.

\begin{conjecture}
Let $k \ge 1$ and let $n$ be a multiple of $k$ with $n \ge 3k$. Then every $k$-connected tournament $T$ of order $n$ satisfies
\[
\avec(T) \le \avec(T_{n,k}) = \frac{n}{2k} - \frac{1}{2} + \frac{6k-3}{n}.
\]
\end{conjecture}

\end{document}